\documentclass[11pt]{article}

\usepackage[a4paper,margin=1in]{geometry}
\usepackage{amsmath,amssymb,amsthm,mathtools}
\usepackage{enumitem}
\usepackage{microtype}
\usepackage[numbers,sort&compress]{natbib}

\usepackage[
colorlinks=true,
linkcolor=blue,
citecolor=blue,
urlcolor=blue,
pagebackref=true
]{hyperref}
\usepackage[nameinlink,noabbrev]{cleveref}
\renewcommand*{\backref}[1]{}

\renewcommand*{\backrefalt}[4]{%
	\ifcase #1\relax
	\or
	\space #2%
	\else
	\space #2%
	\fi
}

\newcommand{\affl}[3]{%
	\noindent #1,
	\textsc{#3}\\
	Email: \texttt{#2}\\[1.5pt]
}

\allowdisplaybreaks
\numberwithin{equation}{section}

\newtheorem{theorem}{Theorem}[section]
\newtheorem{lemma}[theorem]{Lemma}

\newtheorem{corollary}[theorem]{Corollary}
\theoremstyle{definition}
\newtheorem{definition}[theorem]{Definition}
\theoremstyle{remark}

\DeclareMathOperator{\tr}{tr}

\newcommand{\one}{\mathbf 1}
\newcommand{\splus}{s^{+}}
\newcommand{\sminus}{s^{-}}
\newcommand{\Aplus}{A_{+}}
\newcommand{\Aminus}{A_{-}}
\newcommand{\E}{\mathbb E}
\newcommand{\Prob}{\mathbb P}
\newcommand{\norm}[1]{\left\lVert #1\right\rVert}

\title{A Vertex-Localized Positive Square-Energy Strengthening\\
	of Tur\'an's Theorem}

\author{Abhay Jayarajan,  M. Rajesh Kannan,
	Shivaramakrishna Pragada,  Rahul Roy}
\date{}

\begin{document}
	
	\maketitle
	
	\begin{abstract}
		Let $G$ be a graph of order $n$ with the adjacency eigenvalues $\lambda_1(G) \geq \dots \geq \lambda_n(G) $. Let $c(v)$ denote the maximum
		order of a clique containing vertex $v$.  We prove the vertex-localized
		positive square-energy inequality
		\[
		\sqrt{\splus(G)}
		\leq 
		\sum_{v\in V}\left(1-\frac1{c(v)}\right),
		\]
		where
		\[
		\splus(G)=\sum_{\lambda_i(G)>0}\lambda_i(G)^2.
		\]
		We also characterize equality. Apart from edgeless graphs, equality holds precisely for graphs obtained
		from a complete regular multipartite graph by adding an arbitrary number
		of isolated vertices. This settles a conjecture of Kannan, Kumar and Pragada. 
		
	\end{abstract}
	
	\noindent\textbf{Keywords.}
	Positive square energy; local clique number; spectral Tur\'an theorem;
	doubly nonnegative matrix; Caro-Wei process.
	
	\medskip
	\noindent\textbf{2020 Mathematics Subject Classification.}
	05C50, 05C35, 15A42.
	
	\section{Introduction}
	
	Throughout, we assume all graphs are finite and simple.  Let $G=(V(G),E(G))$ be a
	graph of order $n$. Whenever the context is clear, we write $V$ and $ E$ for $V(G)$ and $E(G)$, respectively. For $S\subseteq V$, $G[S]$ denotes the graph induced by vertices in $S$. For $v\in V$, let $N(v)$ denote the open neighborhood of $v$. Let $\omega(G)$ denote the clique number of $G$.
	Let $\mathbf 1$ denote the all-ones vector of the appropriate order.
	For matrices of the same order, let $X\circ Y$ denote their
	Hadamard product.
	We write
	\[
	\|X\|_F^2:=\operatorname{tr}(X^{\top}X)
	\]
	for the squared Frobenius norm. Let the adjacency matrix $A=A(G)$ of the graph $G$ have eigenvalues
	\[
	\lambda_1(G)\geq\lambda_2(G)\geq\cdots\geq\lambda_n(G).
	\]
	The positive and negative square energies of $G$ are defined as follows: 
	\[
	\splus(G):=\sum_{\lambda_i(G)>0}\lambda_i(G)^2,
	\qquad
	\sminus(G):=\sum_{\lambda_i(G)<0}\lambda_i(G)^2.
	\]
	For $v\in V$, let $c(v)$ be the maximum order of a clique of $G$
	containing $v$, that is,
	\[
	c(v):=
	\max\bigl\{|Q|:v\in Q\subseteq V,\ G[Q]\text{ is complete}\bigr\}.
	\]
	We call $c(v)$ the local clique number of $v$.
	Thus $c(v)=1$ exactly when $v$ is isolated.

	The square energies were used by Wocjan and
	Elphick~\cite{MR3104537}, in connection with a spectral lower
	bound for the chromatic number involving the entire adjacency spectrum.
	
	Elphick et al. ~\cite{MR3512335} subsequently initiated the systematic study of
	square energies as graph invariants and conjectured that every connected
	graph $G$ satisfies
	\[
	\min\{\splus(G),\sminus(G)\}\geq n -1.
	\]
	Further structural and extremal work was developed by Abiad et
	al.~\cite{MR4626652}, Akbari et al. \cite{MR4961868, akbari2025refinement}, Elphick and Linz~\cite{MR4755798}, and
	Zhang~\cite{zhang2024extremal}. In \cite{MR4971662}, a connection of $p$-energies to chromatic numbers has been studied. The connected square-energy conjecture was
	proved by Liu et al.~\cite{liu2026positive2}; its equality cases
	were subsequently determined by Hu, Liu, and Wang~\cite{hu2026extremal}.

	Applying the Motzkin-Straus \cite{MR175813} theorem to a nonnegative Perron
	vector, Wilf \cite{MR830598} obtained the following spectral form of Tur\'an's theorem
	\[
	\lambda_1(G)\leq
	\left(1-\frac1{\omega(G)}\right)n.
	\]
	Elphick and Wocjan conjectured that the same right-hand side continues to
	bound $\sqrt{\splus(G)}$; see~\cite{MR4682534, elphick2018conjecturedlowerboundclique}.
	Liu et al. proved the conjecture in \cite{liu2026positive}. Thus, with $r=\omega(G)$,
	\begin{equation}\label{eq:global-LTZ}
		\sqrt{\splus(G)}
		\leq
		\left(1-\frac1r\right)n.
	\end{equation}
	Jayarajan et al.\cite{jayarajan2026equalitycasepositivesquareenergy} subsequently characterized equality
	in~\eqref{eq:global-LTZ}. 
	
	For positive integers $s_1,\ldots,s_p$, let
	$K_{s_1,\ldots,s_p}$ denote the complete $p$-partite graph
	whose partite sets have orders $s_1,\ldots,s_p$.
	
	\begin{theorem} \cite{jayarajan2026equalitycasepositivesquareenergy}\label{thm;globaleq}
		Let $G$ be a graph of order $n$ with clique number $r=\omega(G)$. Then
		\begin{equation}\label{eq:main_result}
			\sqrt{\splus(G)}
			=\left(1-\frac{1}{r}\right)n
		\end{equation}
		if and only if one of the following holds:
		\begin{enumerate}[label=\textup{(\roman*)}]
			\item $r=1$ and $G$ is edgeless;
			\item $r\geq 2$, $r\mid n$, and
			\[
			G\cong K_{\underbrace{n/r,\ldots,n/r}_{r\text{ parts}}}.
			\]
		\end{enumerate}
	\end{theorem}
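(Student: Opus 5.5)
The plan is to prove the two directions separately. The \emph{if} direction is a direct spectral computation. The \emph{only if} direction re-runs the proof of \eqref{eq:global-LTZ} and forces equality in every inequality of that chain.

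\textbf{The \emph{if} direction.} If $G$ is edgeless, then $r=1$ and both sides of \eqref{eq:main_result} are $0$. Now let $r\ge 2$ with $r\mid n$ and $G=K_{n/r,\ldots,n/r}$, and put $m=n/r$.
\begin{itemize}
\item We have $A=J_n-I_r\otimes J_{m}$.
\item Consequently the eigenvalues of $A$ are $n-m$ (once, eigenvector $\one$), $-m$ (multiplicity $r-1$, vectors constant on parts and orthogonal to $\one$), and $0$ (multiplicity $n-r$).
\item Hence $\splus(G)=(n-m)^2=\bigl((1-\tfrac1r)n\bigr)^2$, which is \eqref{eq:main_result}.
\end{itemize}

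\textbf{Setting up the \emph{only if} direction.} Write $A=\Aplus-\Aminus$ with $\Aplus,\Aminus$ positive semidefinite and $\Aplus\Aminus=0$. Then
\[
\splus(G)=\|\Aplus\|_F^2=\tr(\Aplus A).
\]
Realise $\Aplus$ as a Gram matrix, $(\Aplus)_{ij}=\langle x_i,x_j\rangle$ with $x_i\in\mathbb R^n$. This gives
\[
\splus=\sum_{i,j}A_{ij}\langle x_i,x_j\rangle\le \sum_{i,j}A_{ij}\|x_i\|\|x_j\|.
\]
Applying Motzkin--Straus to the nonnegative vector $y=(\|x_i\|)_i$ gives $y^{\top}Ay\le(1-\frac1r)\bigl(\sum_i y_i\bigr)^2$. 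The proof of \eqref{eq:global-LTZ} then has to turn $\sum_i y_i$ into a quantity controlled by $\sqrt{\splus}$ and $n$. That step, which the Liu et al.\ argument supplies (via a bound on the diagonal of $\Aplus$ in terms of $\Aplus$ itself), is the one whose equality analysis I expect to be the crux.

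\textbf{Forcing equality.} Suppose $\sqrt{\splus}=(1-\frac1r)n$ with $r\ge 2$. Each inequality in the chain must then be tight, and I will use the three equalities as follows.
\begin{enumerate}
\item Equality in Cauchy--Schwarz on every edge $ij$ forces $x_i$ and $x_j$ to be positively parallel. Hence within each connected component of $G$ all nonzero $x_i$ lie on one ray, so $\Aplus$ has rank one on each component, with a nonnegative generating vector $y$.
\item Equality in Motzkin--Straus for $y$ forces the support of $y$ to induce a complete $r$-partite graph. Moreover $y$ must be constant on the weights assigned to each part, with each part carrying total weight $\frac1r\sum_i y_i$.
\item Equality in the final step, which is a Cauchy--Schwarz/AM--QM-type comparison between $\sum_i y_i$ and $n$, forces $y$ to be constant and everywhere nonzero.
\end{enumerate}
Together these say that $V$ itself induces a complete $r$-partite graph with equal part sizes $n/r$, which is exactly (ii). If $r=1$, then $G$ is edgeless by definition, which is (i).

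\textbf{Main obstacle.} The hardest step is the third one. Motzkin--Straus equality alone permits $y$ to vanish outside an extremal subset, i.e.\ extra vertices carrying no weight. Showing that such vertices cannot occur requires the exact form of the diagonal bound used in the proof of \eqref{eq:global-LTZ}. I would first try to show that any vertex with $x_v=0$ makes that bound strict, which then contradicts equality.
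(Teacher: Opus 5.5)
\textbf{Gap: the inequality chain you propose to tighten cannot prove \eqref{eq:global-LTZ}.} Your \emph{if} direction is correct. The \emph{only if} direction fails at its base: forcing equality only works along a chain that actually ends at $\bigl((1-\frac1r)n\bigr)^2$, and yours cannot be completed.

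Take $G=C_5$, so $n=5$ and $r=2$.
\begin{itemize}
\item The positive eigenvalues are $2$ and $\frac{\sqrt5-1}{2}$ (twice), so $\tr\Aplus=1+\sqrt5$.
\item By vertex-transitivity, $y_i^2=(\Aplus)_{ii}=\frac{1+\sqrt5}{5}$, and therefore $y^{\top}Ay=10\cdot\frac{1+\sqrt5}{5}=2+2\sqrt5\approx6.47$.
\item This already exceeds the target $\bigl((1-\frac1r)n\bigr)^2=\frac{25}{4}$, whereas $\splus(C_5)=7-\sqrt5\approx4.76$.
\item Likewise $(\sum_i y_i)^2=5(1+\sqrt5)\approx16.18$. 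This exceeds $(1-\frac1r)n^2=12.5$ and also $n\sqrt{\splus(C_5)}=5\sqrt{7-\sqrt5}\approx10.9$.
\end{itemize}
So neither natural way of closing the chain is true: $(\sum_iy_i)^2\le(1-\frac1r)n^2$ fails, and so does the self-referential $(\sum_iy_i)^2\le n\sqrt{\splus}$. The Gram/Cauchy--Schwarz step has already lost too much. There is no ``diagonal bound'' whose equality case you could analyse, and your steps 1--3 are not consequences of equality in any valid argument.

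This is also not how Liu et al.\ prove \eqref{eq:global-LTZ}. As recalled in Sections~\ref{sec:inputs}--\ref{sec:localized-DNN}, they set $X=\Aplus$ and $M=X\circ X$, and combine:
\begin{itemize}
\item $T=\splus\le 2S$;
\item the weighted Cauchy--Schwarz bound $S^2\le\mathcal W H$, using the Caro--Wei separation probabilities;
\item the DNN partition bound $\mathcal W\le\frac12(1-\eta)T$;
\item the harmonic bound $B+2H\le n^2$, together with $B\ge n^2/\E R$.
\end{itemize}

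\textbf{What the paper does, and a workable route.} The paper gives no proof of this statement. It imports it from \cite{jayarajan2026equalitycasepositivesquareenergy} and uses it as a black box in the last step of the localized equality case.

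A workable route is to force equality along the Caro--Wei chain, as \Cref{lem:local-clique-rigidity} does in the localized setting. Equality then yields:
\begin{itemize}
\item $X_{uv}\ge0$ on every edge, with $X_{uv}q_{uv}$ constant over edges;
\item $R=\omega(G)$ for every ordering, so every maximal clique is maximum;
\item $p_v=\omega(G)/n$ for all $v$;
\item equality in \eqref{eq:local-harmonic} at every vertex;
\item equality in \Cref{lem:DNN-partition} for every Caro--Wei partition.
\end{itemize}
These conditions are not yet the conclusion: $C_5$ also has all its maximal cliques maximum. Extracting the balanced complete multipartite structure from them is the real content of the cited theorem, and your proposal does not address it.
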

	A recent direction in extremal combinatorics seeks to replace a global
	parameter or forbidden-substructure condition by quantities attached
	locally to the individual vertices or edges of a graph. Malec and
	Tompkins~\cite{MR4568750} developed this viewpoint systematically
	by deriving localized forms of several classical extremal results,
	including Tur\'an's theorem, the Erd\H{o}s-Gallai theorem, the
	LYM inequality, and the Erd\H{o}s-Ko-Rado theorem. Localized Tur\'an-type inequalities were also obtained independently
	by Brada\v{c}~\cite{Bradac2022}. Adak and
	Chandran~\cite{adak2025vertex} developed a vertex-based localization
	framework for Erd\H{o}s-Gallai-type extremal problems. Adak and Chandran subsequently applied this viewpoint to Tur\'an's theorem by replacing
	the global clique number with the orders of the largest cliques
	containing the individual vertices~\cite{adak2025vertexbasedlocalizationturanstheorem}. In the
	spectral setting, Liu and Ning~\cite{MR4964125} proved the
	edge-localized spectral Tur\'an inequality
	\[
	\lambda_1(G)^2
	\leq
	2\sum_{e\in E}
	\left(1-\frac1{c(e)}\right),
	\]
	where $c(e)$ denotes the maximum order of a clique containing the
	edge $e$, and they also characterized the corresponding equality
	graphs. Kannan, Kumar, and
	Pragada~\cite{kannan2025localizationspectralturantypetheorems} subsequently initiated a
	systematic study of localized spectral Tur\'an-type inequalities.
	
	The localization program has since been extended to
	the signless Laplacian and the $A_\alpha$-matrix 
	\cite{kannan2026localizedturantypeinequalitiesqindex}, as well as to spectral Tur\'an
	inequalities for signed graphs~\cite{xie2026localization}. The present work
	completes the vertex-localized positive square-energy problem for
	arbitrary graphs and determines all the extremal graphs.
	
	Kannan, Kumar, and Pragada ( \cite{kannan2025localizationspectralturantypetheorems}, Conjecture 1.11) proposed the vertex-localized strengthening
	\begin{equation}\label{eq:localized-conjecture}
		\sqrt{\splus(G)}\leq \sum_{v\in V}\left(1-\frac1{c(v)}\right),
	\end{equation}
	which is stronger than~\eqref{eq:global-LTZ}, because
	$c(v)\leq\omega(G)$ for every vertex.  
	
	The purpose of the present paper is
	to prove~\eqref{eq:localized-conjecture} for every graph and determine all
	equality cases. 
	
	\begin{theorem}[Main theorem]\label{thm:main}
		Let $G$ be a finite simple graph.  Then
		\begin{equation}\label{eq:main-ineq}
			\sqrt{\splus(G)}
			\leq
			\sum_{v\in V}\left(1-\frac1{c(v)}\right).
		\end{equation}
		Equality holds if and only if one of the following occurs:
		\begin{enumerate}[label=\textup{(\roman*)}]
			\item $G$ is edgeless;
			\item for some integers $r\geq2$, $t\geq1$, and $q\geq0$,
			\[
			G\cong K_{\underbrace{t,\ldots,t}_{r\text{ parts}}}\,\dot\cup\,qK_1.
			\]
		\end{enumerate}
		
	\end{theorem}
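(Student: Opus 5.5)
We follow the strategy of Liu et al.\ for~\eqref{eq:global-LTZ}, replacing the global Motzkin--Straus step by a localized, Caro--Wei-type weighting. Write $A=\Aplus-\Aminus$, where $\Aplus,\Aminus$ are the positive semidefinite parts of $A$, with $\Aplus\Aminus=0$ and $\splus(G)=\|\Aplus\|_F^2=\tr(A\Aplus)$. Since $\Aplus=A+\Aminus$, the matrix $\Aplus$ is positive semidefinite and satisfies $(\Aplus)_{uv}\geq 0$ for every edge $uv$. It is not entrywise nonnegative, however, so the doubly nonnegative structure has to be recovered as in the global proof.

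\textbf{Step 1: the key quadratic inequality.} For every positive semidefinite $X$ we aim to show
\[
\tr(AX)\leq \sum_{u}\Bigl(1-\frac1{c(u)}\Bigr)\sqrt{X_{uu}}\,\sqrt{\textstyle\sum_v X_{vv}}\,\cdot(\text{suitable normalisation}).
\]
Applied with $X=\Aplus$, this should give $\splus\leq\bigl(\sum_u(1-1/c(u))\bigr)\sqrt{\splus}$, which is the desired inequality. The form we actually target is
\[
\tr(A\Aplus)\leq\sqrt{\splus}\sum_v\Bigl(1-\frac1{c(v)}\Bigr).
\]
To obtain it, factor $\Aplus=\sum_k x_kx_k^\top$ with $\lambda_k>0$, where $x_k=\sqrt{\lambda_k}\,u_k$. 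Then
\[
\tr(A\Aplus)=\sum_k x_k^\top Ax_k=2\sum_k\sum_{uv\in E}x_k(u)x_k(v).
\]
For each vector we use the local Motzkin--Straus inequality of Adak--Chandran / Liu--Ning type, applied to $|x_k|$:
\[
\sum_{uv\in E}2y_uy_v\leq\Bigl(\sum_u y_u\Bigr)\sum_u\Bigl(1-\frac1{c(u)}\Bigr)y_u \quad\text{for } y\geq0.
\]
This is proved by the random ordering (Caro--Wei) process: choose a random clique greedily, then count, for each edge, the probability that it is captured. The difficulty is that summing these over $k$ does not directly give $\sqrt{\splus}$. The global proof handled this with a tensor/Gram trick: embed $\Aplus$ as a Gram matrix $\langle w_u,w_v\rangle$ and consider the doubly nonnegative matrix $\Aplus\circ\Aplus$, or $|{\Aplus}|$ entries. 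We plan to mimic that. Let $M=\Aplus\circ\Aplus$, which is positive semidefinite and entrywise nonnegative. Then $\tr(AM)=\sum_{uv\in E}2(\Aplus)_{uv}^2$, and orthogonality gives $\sum_{uv\in E}2(\Aplus)_{uv}(\Aplus)_{uv}$ relating to $\|\Aplus\|_F^2$.

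\textbf{Step 2: a localized bound for doubly nonnegative matrices.} For doubly nonnegative $M$ we want
\[
\sum_{uv\in E}2M_{uv}\leq\sum_u\Bigl(1-\frac1{c(u)}\Bigr)\sqrt{M_{uu}}\cdot\sqrt{\textstyle\sum M}.
\]
We prove this via Caro--Wei on each Gram vector coordinate: write $M=\sum_k y_ky_k^\top$ with possibly signed $y_k$, then use $M_{uv}\leq\sqrt{M_{uu}M_{vv}}$ and Cauchy--Schwarz to pass to a nonnegative vector $z_u=\sqrt{M_{uu}}$, to which the local Motzkin--Straus inequality applies. Combining this with $\splus=\sum_{uv\in E}2(\Aplus)_{uv}$ and $\sum_u(\Aplus)_{uu}\ldots$ reduces everything to $\sum_u (1-1/c(u))\,(\Aplus)_{uu}$-type weights. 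Finally, Cauchy--Schwarz against the diagonal $\sum_u(\Aplus)_{uu}^2\leq\splus$ closes the argument. This step, finding the exact matrix inequality that localizes correctly, is the main obstacle. We would first try the strategy of reducing to vertex weights $\sqrt{(\Aplus)_{uu}}$ and applying the Caro--Wei bound with those weights.

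\textbf{Step 3: equality.} Tracking equality forces several conditions. Equality in Cauchy--Schwarz forces $(\Aplus)_{uu}$ to be constant on non-isolated vertices. Equality in the Caro--Wei step forces every non-isolated component to have all vertices in maximum cliques of equal size $r$, with the random clique always maximum; this forces complete $r$-partite structure. Equality also forces $\lambda_1$ to be the only positive eigenvalue on a single component, so there is exactly one nontrivial component. Theorem~\ref{thm;globaleq}, applied to that component, then gives balanced parts. Isolated vertices contribute $0$ to both sides, which explains the $qK_1$ term. Conversely, for $K_{t,\dots,t}\,\dot\cup\,qK_1$ we have $\splus=((r-1)t)^2$, matching the right-hand side $rt(1-1/r)$.
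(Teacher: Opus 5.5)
There is a genuine gap: the inequality itself is never established. Steps~1--2 state targets, one of them with an unspecified ``suitable normalisation''. The only concrete mechanism you offer is to replace off-diagonal entries by diagonal ones via $M_{uv}\le\sqrt{M_{uu}M_{vv}}$, and then apply the vertex-localized Motzkin--Straus bound to $z_u=\sqrt{M_{uu}}$ (or $z_u=\sqrt{(\Aplus)_{uu}}$). This discards too much.

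\begin{itemize}
\item The normalisation already goes the wrong way. For $M\succeq0$ one has $\one^{\top}M\one\le(\sum_u\sqrt{M_{uu}})^2$. So Motzkin--Straus applied to $z$ produces the factor $\sum_u z_u$, which is at least the $\sqrt{\one^{\top}M\one}$ you need, not at most.
\item A concrete failure is $C_6$. There $\splus=6$, $\tr\Aplus=4$, so $(\Aplus)_{uu}=\tfrac23$ for all $u$ by vertex-transitivity, and $c\equiv2$. Your chain reads
\[
\splus=2\sum_{uv\in E}(\Aplus)_{uv}\le2\sum_{uv\in E}z_uz_v=8\le\Bigl(\sum_u z_u\Bigr)\sum_u\Bigl(1-\frac1{c(u)}\Bigr)z_u=12,
\]
whereas the theorem needs $\splus\le\bigl(\sum_v(1-1/c(v))\bigr)^2=9$. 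Moreover $8>3\sqrt6=\sqrt{\splus}\sum_v(1-1/c(v))$, so the very first step already overshoots the form $\tr(A\Aplus)\le\sqrt{\splus}\sum_v(1-1/c(v))$ that you are aiming at.
\item Step~2 also bounds the wrong quantity. With $M=\Aplus\circ\Aplus$, what controls $\splus$ is $S=\sum_{uv\in E}\sqrt{M_{uv}}\ge\splus/2$, compared against $T=\one^{\top}M\one=\norm{\Aplus}_F^2=\splus$. It is not $\sum_{uv\in E}M_{uv}$ compared against diagonal entries.
\end{itemize}

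The missing idea is a localization that keeps the off-diagonal mass. The paper runs the Caro--Wei process and couples two estimates through $\eta=\E(1/R)$. Write $D=\sum_v1/c(v)$, $B=\sum_v1/p_v$ and $H=\sum_{uv\in E}1/q_{uv}$.
\begin{itemize}
\item Every pivot has $c(v)\ge R$, so $\Theta=\sum_v p_v/c(v)^2\le\eta$. Cauchy--Schwarz then gives $D^2\le\Theta B\le\eta B$.
\item Together with the local harmonic inequality $B+2H\le n^2$, this yields $H\le\frac12(n^2-D^2/\eta)$.
\item The DNN partition lemma, applied outcome by outcome, gives $\mathcal W=\sum_{uv\in E}M_{uv}q_{uv}\le\frac12(1-\eta)T$.
\item Weighted Cauchy--Schwarz gives $S^2\le\mathcal W H$. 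The identity $(n-D)^2-(1-\eta)(n^2-D^2/\eta)=(n\eta-D)^2/\eta$ then closes this to $2S\le(n-D)\sqrt T$.
\end{itemize}
Nothing in your proposal plays the role of this coupling.

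Step~3 inherits the problem, because it tracks equality in a chain that does not exist. One claim in it is also false: ``the random clique is always maximum'' does not force a complete multipartite structure. In $C_5$ every Caro--Wei outcome has $\omega=2$ pivots and $c\equiv2$, yet $C_5$ is not complete bipartite.

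In the paper, the component reduction comes from $\sqrt{\sum_i\Phi_i^2}\le\sum_i\Phi_i$, which leaves a unique nontrivial component. On it, equality forces $\Theta=\eta$, hence $c(v)=R_\pi$ for every pivot of every ordering, hence $c\equiv\omega$. Only then does the right-hand side become $(1-1/\omega)n$, so that \Cref{thm;globaleq} applies. Your sufficiency check is correct.
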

	
	\subsection*{Outline of the proof}
	We retain two ingredients from Liu et al. and do not reproduce
	their proofs.  The first is their local harmonic inequality for the pivot
	and edge-separation probabilities of the Caro-Wei process.  The second is
	the deterministic upper bound for the separated mass of a doubly
	nonnegative matrix across a partition.

	The new localization is obtained as follows.  Let $K$ be the random pivot
	clique, let $R=|K|$, and put
	\[
	\eta:=\E\left(\frac1R\right).
	\]
	Writing $\tau_v=1/c(v)$, every pivot $v\in K$ satisfies
	$\tau_v\leq1/R$.  Hence
	\[
	\sum_{v\in V}\tau_v^2\Prob(v\in K)
	=
	\E\left(\sum_{v\in K}\tau_v^2\right)
	\leq
	\E\left(\frac1R\right)=\eta.
	\]
	This localizes the harmonic estimate.  At the same time, the doubly nonnegative (DNN)
	partition bound in an outcome with $R$ blocks contains the factor
	$1-1/R$; after taking expectations, it contains the matching factor
	$1-\eta$.  The two estimates fit together through the exact identity
	\[
	(n-D)^2-(1-\eta)\left(n^2-\frac{D^2}{\eta}\right)
	=\frac{(n\eta-D)^2}{\eta},
	\qquad
	D:=\sum_{v\in V}\frac1{c(v)}.
	\]
	This yields a vertex-localized DNN Motzkin-Straus inequality. By applying
	this to $M=A_+\circ A_+$, we prove~\eqref{eq:main-ineq}.
	
	For equality, we first reduce to one connected nontrivial component.
	Equality in the localized coupling forces every pivot in every ordering
	to have local clique number equal to the number of pivots in that
	ordering.  It follows that $c(v)=\omega(G)$ at every vertex and that
	every Caro-Wei outcome has exactly $\omega(G)$ blocks. The equality case in the global bound then settles the equality in the local case.

	\subsection*{Organization}
	Section~\ref{sec:inputs} recalls the Caro-Wei process and the local harmonic inequality from Liu et al.
	Section~\ref{sec:localized-DNN} records another input from Liu et al. and proves the new localized DNN inequality.
	Section~\ref{sec:spectral} gives a proof of the inequality, and
	Section~\ref{sec:equality} determines all equality cases.
	
	\section{  Caro-Wei process}\label{sec:inputs}
	
	The random-order greedy argument underlying the Caro-Wei bound goes back
	to Caro~\cite{caro1979new} and Wei~\cite{wei1981lower}.  We use a formulation of Liu, Tang, and
	Zhang~\cite[Definition~1.5]{liu2026positive}.
	
	\begin{definition}[Caro-Wei process]\label{def:CW}
		Choose a uniformly random total ordering of $V$.  Starting with
		$S_0=V$, repeat the following operation while $S_i\neq\varnothing$.
		Let $x_i$ be the first vertex of $S_i$ in the chosen ordering and set
		\[
		S_{i+1}:=S_i\cap N(x_i),
		\qquad
		B_i:=S_i\setminus S_{i+1}.
		\]
		The vertices $x_0,x_1,\ldots$ are the \emph{pivots}, and the nonempty
		sets $B_0,B_1,\ldots$ are the \emph{blocks}.  The blocks form a partition
		$\mathcal P$ of $V$, and the pivots form a clique.  We denote the
		random pivot set by $K$ and its cardinality by
		\[
		R:=|K|.
		\]
		The number of blocks of $\mathcal P$ is exactly $R$.
	\end{definition}
	
	For $v\in V$ and $uv\in E$, define
	\[
	p_v:=\Prob(v\in K),
	\qquad
	q_{uv}:=\Prob(u\text{ and }v\text{ lie in distinct blocks of }\mathcal P).
	\]
	Note that both probabilities are positive.  The following estimate is the local
	harmonic inequality of Liu et al.  We quote it without
	reproducing the proof.
	
	\begin{theorem}(\cite{liu2026positive},Theorem~2.3)
		\label{thm:LTZ-local-harmonic}
		For every graph $G$ of order $n$ and every $v\in V$,
		\begin{equation}\label{eq:local-harmonic}
			\frac1{p_v}
			+
			\sum_{u\in N(v)}\frac1{q_{uv}}
			\leq n.
		\end{equation}
	\end{theorem}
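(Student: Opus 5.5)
The plan is to prove \eqref{eq:local-harmonic} by strong induction on $n$. The induction rests on the recursive structure of the Caro-Wei process: conditioned on the first pivot $x_0=x$, the rest of the process is a Caro-Wei process on $G_x:=G[N(x)]$ with a uniformly random ordering. The base case $n=1$ is trivial: $p_v=1$ and the sum is empty. Note that $K_n$ gives equality, with $p_v=1$ and $q_{uv}=1$, so the argument must not lose anything in that case.

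\textbf{Step 1 (recursions).} Since $x_0$ is uniform on $V$, we have
\[
p_v=\frac1n\Bigl(1+\sum_{x\in N(v)}p_v^{(x)}\Bigr),
\]
where $p_v^{(x)}$ is the pivot probability of $v$ in $G_x$. This holds because a non-neighbour $x_0\neq v$ kills $v$, $x_0=v$ makes $v$ a pivot, and $x_0\in N(v)$ passes to $G_x$.

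For an edge $uv$, the vertices $u$ and $v$ end up in distinct blocks exactly when $x_0\in\{u,v\}$, or $x_0$ is adjacent to exactly one of them, or $x_0\in N(u)\cap N(v)$ and they are later separated in $G_x$. If $x_0$ is adjacent to neither, both fall into $B_0$. Hence
\[
q_{uv}=\frac1n\Bigl(a_{uv}+\sum_{x\in N(u)\cap N(v)}q_{uv}^{(x)}\Bigr),
\]
where $a_{uv}=|\{u,v\}\cup (N(u)\triangle N(v))|$, with $u,v$ themselves excluded from the symmetric difference.

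\textbf{Step 2 (inverting sums).} To bound reciprocals of these averages, I use the Cauchy--Schwarz form
\[
\frac{1}{\sum_j b_j}\le \sum_j \frac{w_j^2}{b_j},\qquad \sum_j w_j=1,\ w_j\ge0,
\]
with the weights chosen freely for each quantity. For $1/p_v$, spread weight $w$ over the "atom" $1$ (the term $x=v$) and over the neighbours $x\in N(v)$. For $1/q_{uv}$, spread weight over the $a_{uv}$ atoms and over the common neighbours. The natural choice is to put weight proportional to $1$ on each vertex $x$ contributing a term, so that every vertex $x\in V$ is charged $1/n$-scaled multiples of $1/p_v^{(x)}$ or $1/q_{uv}^{(x)}$, or a constant.

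Regrouping the resulting bound on $1/p_v+\sum_{u\in N(v)}1/q_{uv}$ by $x$, the contribution of each $x\in N(v)$ becomes a multiple of
\[
\frac1{p_v^{(x)}}+\sum_{u\in N(v)\cap N(x)}\frac1{q_{uv}^{(x)}}.
\]
The induction hypothesis in $G_x$ bounds this by $|N(x)|<n$. The contributions of $x=v$ and of the atoms (the $u$'s and the vertices distinguishing $u$ from $v$) are bounded by constants. Summing over $x$ should then give at most $n$.

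\textbf{Main obstacle.} The hard part is the choice of weights in Step 2 so that the constant and inductive contributions add up to exactly $n$, rather than something like $2n$. The weights have to reproduce equality for $K_n$, and they must handle the case where an $x$ appears as an atom for some edges $uv$ but as a recursive term for others. My first attempt would take, for each target quantity, weights proportional to the $n$-scaled terms themselves, namely the optimal Cauchy--Schwarz weights $w_j\propto b_j$, then check the resulting double sum by swapping the order of summation over $x$ and $u$.

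If that fails, the fallback is to strengthen the induction hypothesis to a weighted statement. That statement would be $\sum_v \mu_v\bigl(1/p_v+\sum_{u\in N(v)}1/q_{uv}\bigr)\le n\sum_v\mu_v$ for arbitrary nonnegative $\mu$, or a version with an extra free parameter, which would leave room for the slack created when passing to $G_x$.
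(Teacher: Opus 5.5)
\paragraph{Gap in Step 2.} Your recursions in Step~1 are correct. Step~2 carries all the content of the theorem, and you leave it open; the scheme you sketch for it provably cannot be completed.

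To regroup the $x$-terms into $\frac{1}{p_v^{(x)}}+\sum_{u\in N(v)\cap N(x)}\frac{1}{q_{uv}^{(x)}}$, you must give $x$ the same weight $\gamma_x$ when inverting $p_v$ and when inverting every $q_{uv}$ with $x\in N(u)\cap N(v)$. With tied weights, and the optimal equal weights on the $a_{uv}$ atoms, your argument produces the bound $n\,F(\gamma)$, where
\[
F(\gamma)=\Bigl(1-\sum_{x\in N(v)}\gamma_x\Bigr)^2+\sum_{x\in N(v)}|N(x)|\,\gamma_x^2+\sum_{u\in N(v)}\frac{1}{a_{uv}}\Bigl(1-\sum_{x\in N(u)\cap N(v)}\gamma_x\Bigr)^2 .
\]
The minimum of $F$ can exceed $1$. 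Take the paw: a triangle $vuw$ plus a vertex $z$ adjacent only to $u$, so $n=4$. At $v$ you have $|N(u)|=3$, $|N(w)|=2$, $a_{uv}=3$ and $a_{wv}=2$. Hence
\[
F=(1-\gamma_u-\gamma_w)^2+3\gamma_u^2+2\gamma_w^2+\tfrac13(1-\gamma_w)^2+\tfrac12(1-\gamma_u)^2 .
\]
Its minimum over all real $(\gamma_u,\gamma_w)$ is $85/84$, attained at $(11/42,\,9/28)$. The induction hypothesis is tight in both $G_u\cong K_2\cup K_1$ and $G_w\cong K_2$. The true value is $\frac1{p_v}+\frac1{q_{uv}}+\frac1{q_{wv}}=\frac32+1+\frac32=4=n$, yet your method cannot certify anything below $85/21>4$. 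So the obstacle is not finding clever weights: the tied-weight regrouping itself loses.

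\paragraph{Your fallbacks.} Neither repairs this as stated.
\begin{itemize}
\item The weights $w_j\propto b_j$ make Cauchy--Schwarz an identity and depend on the unknown probabilities of $G$. They also untie the coefficients: in the paw, $x=w$ gets weight $3/8$ when inverting $p_v$ but $1/4$ when inverting $q_{uv}$. So the induction hypothesis cannot be applied to the regrouped terms.
\item The weighted hypothesis $\sum_v\mu_v\bigl(\frac1{p_v}+\sum_{u\in N(v)}\frac1{q_{uv}}\bigr)\le n\sum_v\mu_v$ for all $\mu\ge0$ is equivalent to the pointwise statement (take $\mu$ a point mass). It gives no extra room.
\end{itemize}

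\paragraph{What is needed.} You need a genuinely new ingredient. One option is untied weights, with the mismatch absorbed by the trivial bounds $p_v^{(x)}\le1$ and $q_{uv}^{(x)}\le1$. In the paw this recovers exactly $4$: at $x=w$ one gets $\frac{9}{16}\cdot\frac1{p}+\frac14\cdot\frac1{q}\le\frac{9}{16}\cdot2-\frac5{16}$. You would then still have to prove that the resulting optimization is at most $n$ for every graph. The other option is a truly stronger inductive statement. The paper gives you no route to compare against, since it imports the inequality from Liu et al.\ without proof. As written, your proposal does not establish the statement.
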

	
	\begin{corollary}\label{cor:global-harmonic}
		Set
		\begin{equation}\label{eq:B-H-def}
			B:=\sum_{v\in V}\frac1{p_v},
			\qquad
			H:=\sum_{uv\in E}\frac1{q_{uv}}.
		\end{equation}
		Then
		\begin{equation}\label{eq:B-plus-2H}
			B+2H\leq n^2.
		\end{equation}
	\end{corollary}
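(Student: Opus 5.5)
The corollary follows by summing the local harmonic inequality \eqref{eq:local-harmonic} of \Cref{thm:LTZ-local-harmonic} over all vertices. I would do this in three short steps.

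First, for each $v\in V$ the theorem gives
\[
\frac1{p_v}+\sum_{u\in N(v)}\frac1{q_{uv}}\leq n .
\]
All the quantities are finite, since $p_v>0$ and $q_{uv}>0$ as noted before the theorem. Summing these $n$ inequalities over $v\in V$ bounds the total left-hand side by $n\cdot n=n^2$.

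Second, I identify the two parts of the summed left-hand side. The first part is $\sum_{v}1/p_v=B$ by \eqref{eq:B-H-def}. The second part is the double sum $\sum_{v\in V}\sum_{u\in N(v)}1/q_{uv}$, which runs over ordered pairs $(v,u)$ with $uv\in E$. Each edge $uv$ therefore appears exactly twice, once as $(u,v)$ and once as $(v,u)$. The quantity $q_{uv}$ is symmetric in $u$ and $v$, because the event that $u$ and $v$ lie in distinct blocks of $\mathcal P$ does not depend on their order. Hence the double sum equals $2\sum_{uv\in E}1/q_{uv}=2H$.

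Combining the two steps gives $B+2H\leq n^2$, which is \eqref{eq:B-plus-2H}. There is no genuine obstacle here. The only point needing care is the double counting of edges, and the symmetry of $q_{uv}$ is what makes that count exactly $2H$.
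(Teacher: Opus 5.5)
Your proposal is correct and follows the paper's proof: sum the local harmonic inequality \eqref{eq:local-harmonic} over all $v\in V$ and note that each edge $uv$ appears twice in the neighborhood sums. Your explicit remark that $q_{uv}$ is symmetric in $u$ and $v$ is the detail the paper leaves implicit, and it is what makes that double count equal exactly $2H$.
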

	
	\begin{proof}
		Sum~\eqref{eq:local-harmonic} over all vertices.  Each edge $uv$ appears
		once in the neighborhood sum at $u$ and once in the neighborhood sum at
		$v$.
	\end{proof}
	
	A real matrix $M$ is positive semidefinite if $M$ is symmetric and $x^{\top}Mx \geq 0$ for all $ x \in \mathbb{R}^n$ and is denoted by $M \succeq 0$. A real symmetric matrix $M$ is \emph{doubly nonnegative} if $M\succeq0$
	and $M$ is entrywise nonnegative.  For a partition
	$\mathcal Q=\{Q_1,\ldots,Q_k\}$ of $V$, define its separated
	$M$-mass by
	\[
	W_{\mathcal Q}(M)
	:=
	\sum_{\substack{uv\in E\\
			u,v\text{ lie in distinct blocks of }\mathcal Q}}
	M_{uv}.
	\]

	\section{The vertex-localized DNN inequality}\label{sec:localized-DNN}
	The following lemma proved in \cite{liu2026positive} will be needed crucially.
	\begin{lemma}\cite{liu2026positive}
		\label{lem:DNN-partition}
		Let $M$ be a doubly nonnegative matrix indexed by $V$, and put
		\begin{equation} \label{eq:T-def}
			T:=\one^{\top}M\one.
		\end{equation}
		If $\mathcal Q=\{Q_1,\ldots,Q_k\}$ has $k\geq1$ nonempty blocks, then
		\begin{equation}\label{eq:DNN-partition}
			W_{\mathcal Q}(M)
			\leq
			\frac12\left(1-\frac1k\right)T.
		\end{equation}
	\end{lemma}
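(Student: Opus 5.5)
The plan is to drop the restriction to edges, rewrite the separated mass through the block indicator vectors, and then use only positive semidefiniteness plus one Cauchy--Schwarz step.

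\textbf{Reduction to all separated pairs.} Since $M$ is entrywise nonnegative, we may enlarge the sum defining $W_{\mathcal Q}(M)$ from edges $uv\in E$ to all unordered pairs $\{u,v\}$, $u\neq v$, lying in distinct blocks of $\mathcal Q$. This can only increase it. Call the enlarged quantity $W'$. It then suffices to prove
\[
W'\leq \tfrac12\left(1-\tfrac1k\right)T .
\]
This is the only place where entrywise nonnegativity is used.

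\textbf{Rewriting with block indicators.} Let $x_i\in\{0,1\}^V$ be the indicator vector of $Q_i$, so that $\sum_i x_i=\one$. Then
\[
T=\one^{\top}M\one=\sum_{i,j}x_i^{\top}Mx_j .
\]
The off-diagonal block terms $\sum_{i\neq j}x_i^{\top}Mx_j$ count each separated unordered pair twice. Distinct blocks never contain the same vertex, so no diagonal entries of $M$ appear among them. Hence
\[
2W'=T-\sum_{i=1}^k x_i^{\top}Mx_i ,
\]
where the diagonal entries $M_{vv}$ all sit inside the within-block terms on the right. The claim is therefore equivalent to
\[
\sum_{i=1}^k x_i^{\top}Mx_i\geq \frac{T}{k}.
\]

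\textbf{Key step.} Since $M\succeq0$, it has a symmetric square root $M^{1/2}$. Put $y_i:=M^{1/2}x_i$. Then $x_i^{\top}Mx_j=\langle y_i,y_j\rangle$ and
\[
T=\Bigl\|\sum_{i=1}^k y_i\Bigr\|^2 .
\]
By the Cauchy--Schwarz inequality (convexity of the squared norm),
\[
\Bigl\|\sum_{i=1}^k y_i\Bigr\|^2\leq k\sum_{i=1}^k\|y_i\|^2=k\sum_{i=1}^k x_i^{\top}Mx_i ,
\]
which is exactly the required bound. Combining the three steps gives \eqref{eq:DNN-partition}.

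I expect no real obstacle. The only care needed is the bookkeeping in the second step: distinguishing ordered from unordered pairs, which produces the factor $\tfrac12$, and checking that diagonal entries only ever appear in the within-block terms. The case $k=1$ is trivial, since both sides are then $0$.
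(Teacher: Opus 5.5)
Your proof is correct and complete. Dropping the edge restriction via entrywise nonnegativity, the identity $2W'=T-\sum_i x_i^{\top}Mx_i$, and the bound $\bigl\|\sum_i y_i\bigr\|^2\le k\sum_i\|y_i\|^2$ with $y_i=M^{1/2}x_i$ together give exactly \eqref{eq:DNN-partition}.

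The paper does not prove this lemma itself; it imports it from Liu et al.\ without proof, so there is no in-paper argument to compare against. Your Gram-vector argument is a natural, self-contained route. It also shows that positive semidefiniteness alone bounds the mass over \emph{all} separated pairs, with entrywise nonnegativity needed only to pass to the sum over edges of $G$.
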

	
	Let $M$ be a doubly nonnegative matrix indexed by $V$. Define the expected separated $M$-mass by
	\begin{equation}\label{eq:Wbar-def}
		\mathcal W
		:=
		\E W_{\mathcal P}(M)
		=
		\sum_{uv\in E}M_{uv}q_{uv}.
	\end{equation}
	\begin{theorem}
		\label{thm:localized-DNN}
		Let $G$ be a finite simple graph and let $M$ be a doubly nonnegative
		matrix indexed by $V$.  Set
		\[
		T:=\one^{\top}M\one,
		\qquad
		S:=\sum_{uv\in E}\sqrt{M_{uv}}.
		\]
		Then
		\begin{equation}\label{eq:localized-DNN}
			2S\leq \sum_{v\in V}\left(1-\frac1{c(v)}\right)\sqrt{T}.
		\end{equation}
	\end{theorem}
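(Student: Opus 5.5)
The plan is to combine three earlier ingredients, using one Cauchy--Schwarz step on edges and one on vertices: Lemma~\ref{lem:DNN-partition} averaged over the Caro--Wei process, Corollary~\ref{cor:global-harmonic}, and the localization $\tau_v\le 1/R$ for pivots $v\in K$. Write $\tau_v=1/c(v)$, $D=\sum_v\tau_v$, $\eta=\E(1/R)$, and let $B,H$ be as in \eqref{eq:B-H-def}. If $E=\varnothing$ then $S=0$ and there is nothing to prove, so assume $G$ has an edge. Since $R\ge1$ always, $0<\eta\le1$.

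\emph{Step 1 (edge Cauchy--Schwarz).} Write $\sqrt{M_{uv}}=\sqrt{M_{uv}q_{uv}}\cdot q_{uv}^{-1/2}$; this is legitimate because every $q_{uv}>0$. Cauchy--Schwarz over $E$ then gives
\[
S^2\le \Bigl(\sum_{uv\in E}M_{uv}q_{uv}\Bigr)\Bigl(\sum_{uv\in E}\frac1{q_{uv}}\Bigr)=\mathcal W\, H .
\]
Apply Lemma~\ref{lem:DNN-partition} to the random partition $\mathcal P$, which has $R$ blocks. This gives $W_{\mathcal P}(M)\le\frac12(1-1/R)T$ pointwise, and taking expectations yields $\mathcal W\le \frac12(1-\eta)T$. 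Corollary~\ref{cor:global-harmonic} gives $H\le\frac12(n^2-B)$. Together,
\[
4S^2\le (1-\eta)(n^2-B)\,T .
\]

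\emph{Step 2 (vertex localization).} Every pivot $v\in K$ lies in the clique $K$ of order $R$, so $c(v)\ge R$ and hence $\tau_v\le 1/R$. Therefore
\[
\sum_v\tau_v^2p_v=\E\sum_{v\in K}\tau_v^2\le \E\bigl(R\cdot R^{-2}\bigr)=\eta .
\]
Now write $\tau_v=(\tau_v\sqrt{p_v})\cdot p_v^{-1/2}$ and apply Cauchy--Schwarz over $V$:
\[
D^2\le\Bigl(\sum_v\tau_v^2p_v\Bigr)B\le \eta B,
\]
so $B\ge D^2/\eta$. Substituting into Step 1 gives
\[
4S^2\le(1-\eta)\bigl(n^2-D^2/\eta\bigr)T .
\]

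\emph{Step 3 (the algebraic identity).} The identity
\[
(n-D)^2-(1-\eta)\bigl(n^2-D^2/\eta\bigr)=\frac{(n\eta-D)^2}{\eta}\ge0
\]
follows by direct expansion, using $\eta>0$. Hence $4S^2\le (n-D)^2T$. Since $\tau_v\le1$, we have $n-D=\sum_v(1-1/c(v))\ge0$, and $T\ge0$ because $M\succeq0$. Taking square roots yields \eqref{eq:localized-DNN}.

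The only genuinely new step is Step 2: pairing $\eta$ against $D$ so that the factor $1-\eta$ coming from the DNN bound is exactly compensated. The main point to get right is the direction of the inequalities. Note that $n^2-D^2/\eta$ may be negative; this causes no problem, since it only makes the right-hand side smaller, while the left-hand side $4S^2$ is nonnegative. The remaining care is checking that $p_v,q_{uv},\eta>0$, so that all the divisions are legitimate.
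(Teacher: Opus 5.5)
Your proof is correct and follows the paper's argument essentially step for step. You use the same ingredients, only in a slightly different order: the edge Cauchy--Schwarz bound $S^2\le\mathcal W H$, the averaged DNN partition bound $\mathcal W\le\frac12(1-\eta)T$, the harmonic bound $H\le\frac12(n^2-B)$, the pivot localization $\sum_v\tau_v^2p_v\le\eta$ with vertex Cauchy--Schwarz giving $B\ge D^2/\eta$, and the perfect-square identity. One small remark: your worry that $n^2-D^2/\eta$ might be negative is unfounded, since $n^2-D^2/\eta\ge n^2-B\ge 2H\ge 0$, but this does not affect the argument.
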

	
	\begin{proof}
		Recall that $n:=|V|$ and define
		\begin{equation}\label{eq:tau-D-Phi}
			\tau_v:=\frac1{c(v)},
			\qquad
			D:=\sum_{v\in V}\tau_v.
		\end{equation}
		Run the Caro-Wei process from \Cref{def:CW}, and let
		\begin{equation}\label{eq:eta-def}
			\eta:=\E\left(\frac1R\right).
		\end{equation}
		Since $R\geq1$, we have $\eta>0$.
		
		For every outcome, $K$ is a
		clique of order $R$.  Therefore every pivot $v\in K$ satisfies
		$c(v)\geq R$, and hence
		\[
		\tau_v=\frac1{c(v)}\leq\frac1R.
		\]
		It follows pointwise that
		\[
		\sum_{v\in K}\tau_v^2
		\leq
		R\left(\frac1R\right)^2
		=\frac1R.
		\]
		Taking expectations gives
		\begin{equation}\label{eq:Theta-bound}
			\Theta
			:=
			\sum_{v\in V}\tau_v^2p_v
			=
			\E\left(\sum_{v\in K}\tau_v^2\right)
			\leq\eta.
		\end{equation}
		
		By the Cauchy-Schwarz inequality,
		\begin{align}
			D^2
			&=
			\left(
			\sum_{v\in V}
			\tau_v\sqrt{p_v}\,\frac1{\sqrt{p_v}}
			\right)^2 \notag\\
			&\leq
			\left(\sum_{v\in V}\tau_v^2p_v\right)
			\left(\sum_{v\in V}\frac1{p_v}\right)
			=\Theta B
			\leq\eta B.
			\label{eq:D-square}
		\end{align}
		Consequently,
		\begin{equation}\label{eq:B-lower}
			B\geq\frac{D^2}{\eta}.
		\end{equation}
		Combining \Cref{cor:global-harmonic} with~\eqref{eq:B-lower}, we obtain
		\begin{equation}\label{eq:H-upper}
			H
			\leq
			\frac12\left(n^2-B\right)
			\leq
			\frac12\left(n^2-\frac{D^2}{\eta}\right).
		\end{equation}

		An outcome with $R$ pivots has exactly $R$ blocks.  Applying
		\Cref{lem:DNN-partition} to that outcome gives
		\[
		W_{\mathcal P}(M)
		\leq
		\frac12\left(1-\frac1R\right)T.
		\]
		Taking expectations and using~\eqref{eq:eta-def},
		\begin{equation}\label{eq:Wbar-upper}
			\mathcal W
			\leq
			\frac12(1-\eta)T.
		\end{equation}
		
		Cauchy-Schwarz inequality gives
		\begin{align}
			S^2
			&=
			\left(
			\sum_{uv\in E}
			\sqrt{M_{uv}q_{uv}}\,\frac1{\sqrt{q_{uv}}}
			\right)^2 \notag\\
			&\leq
			\left(\sum_{uv\in E}M_{uv}q_{uv}\right)
			\left(\sum_{uv\in E}\frac1{q_{uv}}\right)
			=\mathcal W H.
			\label{eq:weighted-CS}
		\end{align}
		Substituting~\eqref{eq:H-upper} and~\eqref{eq:Wbar-upper} into
		\eqref{eq:weighted-CS},
		\begin{equation}\label{eq:S-intermediate}
			S^2
			\leq
			\frac{T}{4}(1-\eta)
			\left(n^2-\frac{D^2}{\eta}\right).
		\end{equation}
		
		The identity
		\begin{align}
			(n-D)^2
			&-
			(1-\eta)
			\left(n^2-\frac{D^2}{\eta}\right) \notag\\
			&=(n-D)^2-(1-\eta)n^2+(1-\eta)\frac{D^2}{\eta} \notag\\
			&=\eta n^2-2nD+\frac{D^2}{\eta} \notag\\
			&=\frac{(n\eta-D)^2}{\eta}
			\geq0
			\label{eq:perfect-square}
		\end{align}
		shows that
		\[
		(1-\eta)
		\left(n^2-\frac{D^2}{\eta}\right)
		\leq(n-D)^2.
		\]
		Hence~\eqref{eq:S-intermediate} yields
		\[
		S^2\leq\frac{(n-D)^2T}{4}.
		\]
		Since $S,T\geq0$, taking square roots gives
		\[
		2S\leq(n-D)\sqrt{T}=\sum_{v\in V}\left(1-\frac1{c(v)}\right)\sqrt{T},
		\]
		which is~\eqref{eq:localized-DNN}.
	\end{proof}

	\section{Proof of the inequality}\label{sec:spectral}
	
	Let
	\begin{equation}\label{eq:spectral-decomp}
		A=\Aplus-\Aminus,
		\qquad
		\Aplus,\Aminus\succeq0,
		\qquad
		\Aplus\Aminus=\Aminus\Aplus=0
	\end{equation}
	be the positive--negative spectral decomposition of $A=A(G)$.
	
	\begin{proof}[Proof of the inequality in \Cref{thm:main}]
		Set
		\begin{equation}\label{eq:spectral-M}
			X:=\Aplus,
			\qquad
			M:=X\circ X.
		\end{equation}
		The matrix $M$ is entrywise nonnegative.  Since $X\succeq0$, the Schur
		product theorem~\cite[Theorem~7.5.3]{MR2978290} implies $M\succeq0$;
		hence $M$ is doubly nonnegative.  With the notation of
		\Cref{thm:localized-DNN},
		\begin{align}
			T
			&=\one^{\top}M\one
			=\sum_{u,v\in V}X_{uv}^2
			=\norm{X}_F^2 \notag\\
			&=\tr(X^2)
			=\sum_{\lambda_i(G)>0}\lambda_i(G)^2
			=\splus(G).
			\label{eq:T-splus}
		\end{align}
		Moreover, using~\eqref{eq:spectral-decomp},
		\begin{align}
			\tr(AX)
			&=\tr\bigl((\Aplus-\Aminus)\Aplus\bigr) \notag\\
			&=\tr(\Aplus^2)-\tr(\Aminus\Aplus)
			=\tr(\Aplus^2)
			=T.
			\label{eq:T-trAX}
		\end{align}
		Also,
		\begin{equation}\label{eq:trAX-edge}
			\tr(AX)=2\sum_{uv\in E}X_{uv}.
		\end{equation}
		Therefore, with
		\[
		S:=\sum_{uv\in E}\sqrt{M_{uv}}
		=\sum_{uv\in E}|X_{uv}|,
		\]
		\eqref{eq:T-trAX} and~\eqref{eq:trAX-edge} give
		\begin{equation}\label{eq:T-le-2S}
			T
			=2\sum_{uv\in E}X_{uv}
			\leq
			2\sum_{uv\in E}|X_{uv}|
			=2S.
		\end{equation}
		Applying \Cref{thm:localized-DNN} to $M$ and using~\eqref{eq:T-splus},
		\[
		T\leq2S\leq\sum_{v\in V}\left(1-\frac1{c(v)}\right)\sqrt{T}.
		\]
		If $T=0$, then~\eqref{eq:main-ineq} is immediate.  If $T>0$, division by
		$\sqrt{T}$ gives
		\[
		\sqrt{\splus(G)}=\sqrt{T}\leq\sum_{v\in V}\left(1-\frac1{c(v)}\right). \qedhere
		\]
	\end{proof}
	
	\section{Extremal graphs}\label{sec:equality}
	
	\subsection{Reduction to one nontrivial component}
	
	\begin{lemma}\label{lem:component-reduction}
		Suppose equality holds in~\eqref{eq:main-ineq}.  Then either $G$ is
		edgeless, or $G$ has exactly one connected component containing an edge;
		all other components are isolated vertices.  In the latter case, equality
		holds for the unique nontrivial component.
	\end{lemma}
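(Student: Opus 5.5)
The plan is to exploit additivity on both sides over connected components, together with the strict subadditivity of the square root. Let $G_1,\dots,G_m$ be the connected components of $G$. The spectrum of $A(G)$ is the multiset union of the spectra of the $A(G_j)$, so
\[
\splus(G)=\sum_{j=1}^m \splus(G_j).
\]
The local clique number of a vertex depends only on its component, since every clique lies inside one component. Hence the right-hand side of~\eqref{eq:main-ineq} splits as $\sum_j F(G_j)$, where
\[
F(H):=\sum_{v\in V(H)}\left(1-\frac1{c(v)}\right).
\]
An isolated vertex contributes $0$ to both sides.

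Next, let $G_1,\dots,G_k$ be the components containing an edge. Each such component has $\splus(G_j)>0$, because $\tr A(G_j)=0$ and $A(G_j)\neq0$ force a positive eigenvalue. Applying the already proved inequality~\eqref{eq:main-ineq} to each $G_j$ gives $\sqrt{\splus(G_j)}\le F(G_j)$. Then
\[
\sqrt{\splus(G)}=\Bigl(\sum_{j=1}^k \splus(G_j)\Bigr)^{1/2}\le \sum_{j=1}^k\sqrt{\splus(G_j)}\le \sum_{j=1}^k F(G_j)=F(G).
\]
The first inequality is strict when $k\ge2$, because all terms are positive and $(a+b)^{1/2}<a^{1/2}+b^{1/2}$ for $a,b>0$.

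So if equality holds in~\eqref{eq:main-ineq} and $G$ is not edgeless, then $k=1$, and all remaining components are isolated vertices. With $k=1$ the chain above collapses to $\sqrt{\splus(G_1)}\le F(G_1)=F(G)=\sqrt{\splus(G)}=\sqrt{\splus(G_1)}$, so equality holds for the unique nontrivial component $G_1$.

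There is no real obstacle. The only points to check are that $c(v)$ is computed componentwise and that nontrivial components have strictly positive $\splus$; both are immediate.
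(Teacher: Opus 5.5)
Your proof is correct and follows essentially the same route as the paper: $\splus$ and the localized sum both split over connected components, and a strict square-root subadditivity argument then rules out two nontrivial components. The only difference is where the strictness comes from. You apply $\sqrt{\sum_j \splus(G_j)}\le\sum_j\sqrt{\splus(G_j)}$ first and use $\splus(G_j)>0$ from the trace argument, whereas the paper applies the componentwise bound first and uses $\sqrt{\sum_i\Phi_i^2}\le\sum_i\Phi_i$ with $\Phi_i>0$ for every nontrivial component.
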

	
	\begin{proof}
		Let $G_1,\ldots,G_m$ be the connected components of $G$, and put
		\[
		s_i:=\splus(G_i),
		\qquad
		\Phi_i:=\sum\limits_{w \in V(G_i)}\left(1-\frac{1}{c(w)}\right).
		\]
		The adjacency matrix is block diagonal, and for every $v \in V$, $c(v)$ is
		computed within components.  Hence
		\[
		\splus(G)=\sum_{i=1}^m s_i,
		\qquad
		\sum_{v\in V}\left(1-\frac1{c(v)}\right)=\sum_i \Phi_i.
		\]
		Applying the already proved inequality to each component,
		$s_i\leq\Phi_i^2$.  Therefore
		\begin{equation}\label{eq:component-chain}
			\sqrt{\splus(G)}
			=\sqrt{\sum_i s_i}
			\leq\sqrt{\sum_i\Phi_i^2}
			\leq\sum_i\Phi_i
			=\sum_{v\in V}\left(1-\frac1{c(v)}\right).
		\end{equation}
		For nonnegative $\Phi_i$, equality in the second inequality is equivalent
		to
		\[
		\sum_{i<j}\Phi_i\Phi_j=0.
		\]
		Thus at most one $\Phi_i$ is positive.  A connected component has
		$\Phi_i=0$ exactly when it consists of a single isolated vertex. Thus, every
		vertex in a nontrivial connected component satisfies $c_{G_i}(v)\geq2$.
		Hence at most one component contains an edge.  If such a component exists,
		equality throughout~\eqref{eq:component-chain} also forces equality for
		that component.
	\end{proof}
	
	For the remainder of the necessity proof, assume that $G$ is connected,
	has at least one edge, and satisfies
	\begin{equation}\label{eq:connected-equality}
		\sqrt{\splus(G)}=\sum_{v\in V}\left(1-\frac1{c(v)}\right).
	\end{equation}
	In particular, $n\geq2$ and every vertex has positive degree.
	
	Set $X=\Aplus$, $M=X\circ X$, and retain
	\begin{equation}\label{eq:T-S-special}
		T:=\one^{\top}M\one=\splus(G),
		\qquad
		S:=\sum_{uv\in E}\sqrt{M_{uv}}.
	\end{equation}
	By~\eqref{eq:T-le-2S}, \Cref{thm:localized-DNN}, and
	\eqref{eq:connected-equality},
	\[
	T\leq2S\leq\sum_{v\in V}\left(1-\frac1{c(v)}\right)\sqrt{T}=T.
	\]
	Thus
	\begin{equation}\label{eq:DNN-equality-special}
		2S=\sum_{v\in V}\left(1-\frac1{c(v)}\right)\sqrt{T}.
	\end{equation}

	\subsection{Equality forces constant local clique number}
	
	\begin{lemma}
		
		\label{lem:local-clique-rigidity}
		Under~\eqref{eq:connected-equality},
		\[
		c(v)=\omega(G)
		\qquad\text{for every }v\in V.
		\]
		Moreover, every Caro-Wei ordering has exactly $\omega(G)$ pivots.
	\end{lemma}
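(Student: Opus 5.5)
The plan is to trace equality backward through the chain of inequalities in the proof of \Cref{thm:localized-DNN}, applied to $M=X\circ X$ with $X=\Aplus$. By~\eqref{eq:DNN-equality-special} we have $4S^2=(n-D)^2T$, and $T=\splus(G)>0$ because $G$ has an edge. Hence $S>0$, and every inequality used between~\eqref{eq:weighted-CS} and the final bound must be an equality. In particular, equality in~\eqref{eq:perfect-square} gives $D=n\eta$. We also need the two factors in~\eqref{eq:S-intermediate} to be tight, provided neither of them vanishes.

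The key step is to read off equality in~\eqref{eq:Theta-bound} and~\eqref{eq:D-square}. Equality in $S^2\le\mathcal W H$, with the right-hand side replaced by the smaller bound $\frac T4(1-\eta)(n^2-D^2/\eta)$, means that every intermediate bound is attained. Concretely:
\begin{itemize}
\item $H=\frac12(n^2-D^2/\eta)$, which forces $B=D^2/\eta$;
\item $\mathcal W=\frac12(1-\eta)T$.
\end{itemize}
To justify this, I will observe that $S>0$ forces both $\mathcal W>0$ and $H>0$. Since the product of the upper bounds equals $S^2\le \mathcal W H$, and each factor is at most its bound, each factor must equal its bound.

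From $B=D^2/\eta$ the chain~\eqref{eq:D-square} gives $\Theta B=\eta B$. Since $B>0$, this yields $\Theta=\eta$, that is, equality in~\eqref{eq:Theta-bound}. Now $\Theta=\eta$ is the expectation of the pointwise inequality $\sum_{v\in K}\tau_v^2\le 1/R$. Every ordering has positive probability, so equality must hold for every ordering. Termwise, this means $c(v)=R$ for every pivot $v$ of every ordering.

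Next, each vertex $v$ is the first pivot in any ordering that begins with $v$. Hence, in every ordering, all pivots share the common value $c(x_0)=R$, where $x_0$ is the first vertex. I then need to show that $R$ does not depend on the ordering. Take an ordering starting at $v$ and extend a maximum clique $Q\ni v$ by listing $Q$ first with $v$ leading. Every vertex of $Q$ then becomes a pivot, so $R\ge c(v)$. Together with $c(v)=R$, the pivots in that ordering are exactly $Q$. This shows that each pivot $u$ has $c(u)=c(v)$, so adjacent vertices lying in a common maximum clique share the same local clique number.

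For an arbitrary edge $uw$, I will order so that $u$ comes first and $w$ second. Then $w$ is a pivot of that ordering, so $c(w)=R=c(u)$. Since $G$ is connected, $c$ is constant, and its value must be $\omega(G)$. Finally, $R=c(x_0)=\omega(G)$ for every ordering, which gives the second claim.

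The main obstacle I expect is the bookkeeping of equality in the combined Cauchy--Schwarz product step. There I must rule out degenerate cases such as $\eta=1$ (which would make $1-\eta=0$) or $\mathcal W=0$. The case $\eta=1$ would mean $R=1$ always, which is impossible when there is an edge. The edge-based argument above is clean once $c(v)=R$ for all pivots is established.
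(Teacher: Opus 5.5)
Your proposal is correct and follows essentially the same route as the paper: force equality along $S^2\le\mathcal W H\le\widehat{\mathcal W}\,\widehat H\le (n-D)^2T/4$, deduce $H=\widehat H$, hence $B=D^2/\eta$ and $\Theta=\eta$, then pointwise equality giving $c(v)=R_\pi$ for every pivot, and finally propagate along edges using an ordering that starts with $u,w$. The differences are cosmetic. You get the needed positivity from $\mathcal W H\ge S^2>0$, whereas the paper gets $\widehat{\mathcal W}>0$ from $\eta<1$; your route makes your separate worry about $\eta=1$ unnecessary. Your paragraph with the maximum clique $Q\ni v$ is superfluous, and the bound $\frac T4(1-\eta)(n^2-D^2/\eta)$ is the larger quantity, not the smaller one.
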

	
	\begin{proof}
		Retain all notation from the proof of \Cref{thm:localized-DNN}:
		\[
		\tau_v=\frac1{c(v)},
		\quad
		D=\sum_v\tau_v,
		\quad
		\eta=\E\left(\frac1R\right),
		\]
		\[
		B=\sum_v\frac1{p_v},
		\quad
		H=\sum_{uv\in E}\frac1{q_{uv}},
		\quad
		\Theta=\sum_v\tau_v^2p_v,
		\quad
		\mathcal W=\sum_{uv\in E}M_{uv}q_{uv}.
		\]
		Define
		\begin{equation}\label{eq:upper-H-W}
			\widehat H
			:=\frac12\left(n^2-\frac{D^2}{\eta}\right),
			\qquad
			\widehat{\mathcal W}
			:=\frac12(1-\eta)T.
		\end{equation}
		The proof of \Cref{thm:localized-DNN} gives
		\begin{equation}\label{eq:equality-long-chain}
			S^2
			\leq\mathcal W H
			\leq\widehat{\mathcal W}\,\widehat H
			\leq\frac{(n-D)^2T}{4}.
		\end{equation}
		By~\eqref{eq:DNN-equality-special}, the first and last terms in
		\eqref{eq:equality-long-chain} are equal.
		
		Because $G$ is connected and nonempty, every first pivot has a neighbor,
		so every outcome has $R\geq2$.  Hence $\eta<1$ and
		$\widehat{\mathcal W}>0$.  Also $H>0$, since $E\neq\varnothing$.
		Since
		\[
		0\leq\mathcal W\leq\widehat{\mathcal W},
		\qquad
		0<H\leq\widehat H,
		\]
		equality of the endpoint products in~\eqref{eq:equality-long-chain}
		forces
		\begin{equation}\label{eq:W-H-equality}
			H=\widehat H.
		\end{equation}
		
		The proof of~\eqref{eq:H-upper} gives the more detailed chain
		\[
		H
		\leq\frac12(n^2-B)
		\leq\frac12\left(n^2-\frac{D^2}{\eta}\right)
		=\widehat H.
		\]
		Together with~\eqref{eq:W-H-equality}, this yields
		\begin{equation}\label{eq:B-exact}
			B=\frac{D^2}{\eta}.
		\end{equation}
		Now~\eqref{eq:D-square}, \eqref{eq:Theta-bound}, and
		\eqref{eq:B-exact} imply
		\[
		D^2\leq\Theta B\leq\eta B=D^2.
		\]
		Since $B>0$, equality holds throughout and therefore
		\begin{equation}\label{eq:Theta-eta}
			\Theta=\eta.
		\end{equation}
		
		For a fixed total ordering $\pi$, let $K_\pi$ be its pivot set and
		$R_\pi:=|K_\pi|$.  Pointwise,
		\begin{equation}\label{eq:pointwise-tau}
			\sum_{v\in K_\pi}\tau_v^2\leq\frac1{R_\pi}.
		\end{equation}
		The expectation of the difference between the two sides is
		$\eta-\Theta=0$ by~\eqref{eq:Theta-eta}.  Every ordering has positive
		probability, and the difference in~\eqref{eq:pointwise-tau} is
		nonnegative.  Hence equality holds in~\eqref{eq:pointwise-tau} for every
		ordering.
		
		For each pivot $v\in K_\pi$, we have $\tau_v\leq1/R_\pi$.  There are
		$R_\pi$ pivots, and
		\[
		\sum_{v\in K_\pi}\tau_v^2
		=\frac1{R_\pi}
		=R_\pi\left(\frac1{R_\pi}\right)^2.
		\]
		Thus we must have
		\begin{equation}\label{eq:c-pivot-R}
			c(v)=R_\pi
			\qquad
			(v\in K_\pi)
		\end{equation}
		for every ordering $\pi$.
		
		Let $uv\in E$.  Choose an ordering whose first two vertices are $u$
		and $v$, in that order.  Then $u$ is the first pivot and $v$ is the
		second pivot, because $v\in N(u)$ and is first in the induced order on
		$N(u)$.  Applying~\eqref{eq:c-pivot-R} to both pivots gives
		$c(u)=c(v)$.  Since $G$ is connected, $c(v)$ is constant over all
		vertices.  A vertex belonging to a maximum clique has local clique number
		$\omega(G)$, so
		\[
		c(v)=\omega(G)
		\qquad(v\in V).
		\]
		Finally, in every ordering the first pivot satisfies
		$c(x_0)=R_\pi$ by~\eqref{eq:c-pivot-R}; hence
		$R_\pi=\omega(G)$ for every ordering.
	\end{proof}
	
	\subsection{Connected necessity case}
	
	The equality-attaining graphs in the global bound \eqref{eq:global-LTZ} were determined in \cite{jayarajan2026equalitycasepositivesquareenergy}.

	\begin{lemma}
		Under \eqref{eq:connected-equality}, $\omega(G) \mid n$ and  $G\cong K_{\underbrace{n/\omega(G),\ldots,n/\omega(G)}_{\omega(G)\text{ parts}}}$.
	\end{lemma}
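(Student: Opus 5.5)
By \Cref{lem:local-clique-rigidity}, equality \eqref{eq:connected-equality} forces $c(v)=\omega(G)=:r$ for every $v\in V$. The right-hand side of \eqref{eq:connected-equality} therefore collapses to the global quantity:
\[
\sum_{v\in V}\left(1-\frac1{c(v)}\right)=\left(1-\frac1r\right)n .
\]
Consequently $\sqrt{\splus(G)}=(1-1/r)n$, which is exactly equality in the global bound \eqref{eq:global-LTZ}. The plan is simply to invoke the characterization of that equality case, \Cref{thm;globaleq}.

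Since $G$ is connected and contains an edge, we have $r\geq2$, so case (i) of \Cref{thm;globaleq} (edgeless, $r=1$) is excluded. Case (ii) then gives $r\mid n$ and
\[
G\cong K_{n/r,\ldots,n/r}
\]
with $r=\omega(G)$ parts. This is the claimed conclusion.

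The only point needing care is checking that the hypotheses of \Cref{thm;globaleq} match: it is stated for an arbitrary graph with $r=\omega(G)$, and our $G$ qualifies. The constancy $c(v)=\omega(G)$ established in \Cref{lem:local-clique-rigidity} is what makes the local and global right-hand sides coincide. There is no real obstacle; all the substantive work was done in \Cref{lem:local-clique-rigidity} and in the cited global result. For completeness, the converse direction of \Cref{thm:main} also follows from these facts. In $K_{t,\ldots,t}\,\dot\cup\,qK_1$, every vertex of the multipartite part has $c(v)=r$, and the isolated vertices contribute $0$ to both sides. By \Cref{thm;globaleq} applied to $K_{t,\ldots,t}$, together with the component additivity used in \Cref{lem:component-reduction}, equality holds.
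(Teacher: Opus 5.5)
Your proof is correct and matches the paper's argument: \Cref{lem:local-clique-rigidity} collapses the local sum to $\left(1-\frac{1}{\omega(G)}\right)n$, and \Cref{thm;globaleq} (case (ii), since connectedness with an edge gives $\omega(G)\geq 2$) finishes. Your aside on the converse, which uses the ``if'' direction of \Cref{thm;globaleq} plus additivity over components, is also valid; the paper instead checks sufficiency directly from regularity and the single positive eigenvalue of $K_{t,\ldots,t}$.
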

	\begin{proof}
		Applying \Cref{lem:local-clique-rigidity} and using \eqref{eq:connected-equality}, we obtain
		\begin{equation*}
			\sqrt{\splus(G)}=\bigg(1-\dfrac{1}{\omega(G)}\bigg)n.
		\end{equation*}
		Therefore, by \Cref{thm;globaleq}, $\omega(G) \mid n$ and $G\cong K_{n/\omega(G),\ldots,n/\omega(G)}$.
		
	\end{proof}
	
	\subsection{Verification of the extremal graphs}
	
	\begin{proof}[Proof of \Cref{thm:main} (equality)]
		By \Cref{lem:component-reduction}, every equality-attaining graph containing an edge has one
		connected nontrivial component $G_1$, which the preceding argument shows to be
		$K_{t,\ldots,t}$ with $t=\dfrac{\vert V(G_1) \vert}{\omega(G_1)}$ and $r\geq2$ equal parts; every other component is an
		isolated vertex.  Thus necessity gives
		\[
		G\cong K_{\underbrace{t,\ldots,t}_{r\text{ parts}}}\,\dot\cup\,qK_1
		\qquad(r\geq2,\ t\geq1,\ q\geq0).
		\]
		
		Conversely, every edgeless graph has $\splus(G)=0$ and $c(v)=1$ for
		all $v$, so both sides of~\eqref{eq:main-ineq} vanish.  Now let
		\[
		G=K_{\underbrace{t,\ldots,t}_{r\text{ parts}}}\,\dot\cup\,qK_1,
		\qquad r\geq2,\quad t\geq1,\quad q\geq0.
		\]
		The complete $r$-partite component is regular of degree
		$(r-1)t$ and has exactly one positive adjacency eigenvalue (see Theorem 6.7 in \cite{MR572262}).  The isolated vertices
		contribute only zero eigenvalues.  Hence
		\[
		\sqrt{\splus(G)}=(r-1)t.
		\]
		Every vertex in the nontrivial component has local clique number $r$,
		whereas every isolated vertex has local clique number $1$.  Therefore
		\[
		\sum_{v\in V}\left(1-\frac1{c(v)}\right)
		=rt\left(1-\frac1r\right)+q(1-1)
		=(r-1)t
		=\sqrt{\splus(G)}.
		\]
		This proves sufficiency and completes the proof of \Cref{thm:main}.
	\end{proof}

	\section*{Acknowledgments}
	Abhay Jayarajan acknowledges support from the DST INSPIRE Fellowship (IF220692). M.~Rajesh Kannan acknowledges financial support from ANRF-CRG, India (File No.\ CRG/2023/002747). Rahul Roy thanks the University Grants Commission (UGC), India, for financial support (NTA Ref. No. 231610209574).

	\section*{Declaration of AI use}
	The authors acknowledge the use of ChatGPT (GPT-5.6, OpenAI; accessed August 2026) for improving the exposition of the manuscript and refining some of the proofs. All mathematical statements have been independently verified by the authors, who are solely responsible for the correctness of the results presented in this paper.
	
	\bibliographystyle{plainnat}
	\bibliography{Ref_loc_edited}

	\affl{Abhay Jayarajan}{ma23resch02001@iith.ac.in, abhayjayarajan@gmail.com}{ Department of Mathematics, Indian Institute of Technology Hyderabad, Kandi, Sangareddy 502284, India.}
	
	\affl{M. Rajesh Kannan}{rajeshkannan@math.iith.ac.in, rajeshkannan1.m@gmail.com}{Department of Mathematics, Indian Institute of Technology Hyderabad, Kandi, Sangareddy 502284, India.}
	
	\affl{Shivaramakrishna Pragada}{shivaramakrishna\_pragada@sfu.ca, shivaramkratos@gmail.com}{Department of Mathematics, Simon Fraser University, Burnaby, Canada}
	
	\affl{Rahul Roy}{ma23resch11004@iith.ac.in, rahulroy13832@gmail.com}{ Department of Mathematics, Indian Institute of Technology Hyderabad, Kandi, Sangareddy 502284, India.}
\end{document}